\providecommand{\U}[1]{\protect\rule{.1in}{.1in}}
\newtheorem{theorem}{Theorem}[section]
\newtheorem{lemma}{Lemma}[section]
\newtheorem{remark}{Remark}[section]
\renewcommand{\@biblabel}[1]{}
\begin{document}

\begin{center}
{\Large \textbf{Kernel estimation of the tail index of a right-truncated P%
\textbf{areto-type distribution}}}

\medskip \medskip

{\large Souad Benchaira, Djamel Meraghni, Abdelhakim Necir}$^{\ast }$\medskip

{\small \textit{Laboratory of Applied Mathematics, Mohamed Khider
University, Biskra, Algeria}}\medskip \medskip 
\begin{equation*}
\end{equation*}
\end{center}

\noindent\textbf{Abstract}\medskip

\noindent In this paper, we define a kernel estimator for the tail index of
a Pareto-type distribution under random right-truncation and establish its
asymptotic normality. A simulation study shows that, compared to the
estimators recently proposed by \cite{GS2015} and \cite{BchMN-15b}, this
newly introduced estimator behaves better, in terms of bias and mean squared
error, for small samples.\medskip

\noindent \textbf{Keywords:} Extreme value index; Heavy-tails; Kernel
estimation; Product-limit estimator; Random truncation.\medskip

\noindent \textbf{AMS 2010 Subject Classification:} 60F17, 62G30, 62G32,
62P05.

\vfill

\vfill

\noindent {\small $^{\text{*}}$Corresponding author: \texttt{%
necirabdelhakim@yahoo.fr} \newline
}

\noindent {\small \textit{E-mail addresses:}\newline
\texttt{benchaira.s@hotmail.fr} (S.~Benchaira)\newline
\texttt{djmeraghni@yahoo.com} (D.~Meraghni)}

\section{\textbf{Introduction\label{sec1}}}

\noindent Let $\left( \mathbf{X}_{i},\mathbf{Y}_{i}\right) ,$ $1\leq i\leq N$
be a sample of size $N\geq 1$ from a couple $\left( \mathbf{X},\mathbf{Y}%
\right) $ of independent random variables (rv's) defined over some
probability space $\left( \Omega ,\mathcal{A},\mathbf{P}\right) ,$ with
continuous marginal distribution functions (df's) $\mathbf{F}$ and $\mathbf{G%
}$ respectively.$\ $Suppose that $\mathbf{X}$ is truncated to the right by $%
\mathbf{Y},$ in the sense that $\mathbf{X}_{i}$ is only observed when $%
\mathbf{X}_{i}\leq \mathbf{Y}_{i}.$ We assume that both survival functions $%
\overline{\mathbf{F}}:=1-\mathbf{F}$\textbf{\ }and $\overline{\mathbf{G}}:=1-%
\mathbf{G}$ are regularly varying at infinity with tail indices $\gamma
_{1}>0$ and $\gamma _{2}>0$ respectively. That is, for any $x>0,$%
\begin{equation}
\lim_{z\rightarrow \infty }\frac{\overline{\mathbf{F}}\left( xz\right) }{%
\overline{\mathbf{F}}\left( z\right) }=x^{-1/\gamma _{1}}\text{ and }%
\lim_{z\rightarrow \infty }\frac{\overline{\mathbf{G}}\left( xz\right) }{%
\overline{\mathbf{G}}\left( z\right) }=x^{-1/\gamma _{2}}.  \label{RV-1}
\end{equation}%
This class of distributions, which includes models such as Pareto, Burr, Fr%
\'{e}chet, stable and log-gamma, plays a prominent role in extreme value
theory. Also known as heavy-tailed, Pareto-type or Pareto-like
distributions, these models have important practical applications and are
used rather systematically in certain branches of non-life insurance as well
as in finance, telecommunications, geology, and many other fields %
\citep[see, e.g.,][]{Res06}. Let us denote $\left( X_{i},Y_{i}\right) ,$ $%
i=1,...,n$ to be the observed data, as copies of a couple of rv's $\left(
X,Y\right) ,$ corresponding to the truncated sample $\left( \mathbf{X}_{i},%
\mathbf{Y}_{i}\right) ,$ $i=1,...,N,$ where $n=n_{N}$ is a sequence of
discrete rv's for which we have,\ by of the weak law of large numbers 
\begin{equation*}
n_{N}/N\overset{\mathbf{p}}{\rightarrow }p:=\mathbf{P}\left( \mathbf{X}\leq 
\mathbf{Y}\right) ,\text{ as }N\rightarrow \infty .
\end{equation*}%
The joint distribution of $X_{i}$ and $Y_{i}$ is%
\begin{equation*}
\begin{array}{ll}
H\left( x,y\right)  & :=\mathbf{P}\left( X\leq x,Y\leq y\right) \medskip  \\ 
& =\mathbf{P}\left( \mathbf{X}\leq \min \left( x,\mathbf{Y}\right) ,\mathbf{Y%
}\leq y\mid \mathbf{X}\leq \mathbf{Y}\right) =p^{-1}\dint_{0}^{y}\mathbf{F}%
\left( x,z\right) d\mathbf{G}\left( z\right) .%
\end{array}%
\end{equation*}%
The marginal distributions of the rv's $X$ and $Y,$ respectively denoted by $%
F$ and $G,$ are equal to $F\left( x\right) =p^{-1}\dint_{0}^{x}\overline{%
\mathbf{G}}\left( z\right) d\mathbf{F}\left( z\right) $ and $G\left(
y\right) =p^{-1}\int_{0}^{y}\mathbf{F}\left( z\right) d\mathbf{G}\left(
z\right) .$ The tail of df $F$ simultaneously depends on $\overline{\mathbf{G%
}}$ and $\overline{\mathbf{F}}$ while that of $\overline{G}$ only relies on $%
\overline{\mathbf{G}}\mathbf{.}$ By using Proposition B.1.10 in \cite{deHF06}%
, to the regularly varying functions $\overline{\mathbf{F}}$ and $\overline{%
\mathbf{G}},$ we show that both $\overline{G}$ and $\overline{F}$ are
regularly varying at infinity as well, with respective indices $\gamma _{2}$
and $\gamma :=\gamma _{1}\gamma _{2}/\left( \gamma _{1}+\gamma _{2}\right) .$
In other words, for any $s>0,$%
\begin{equation}
\lim_{x\rightarrow \infty }\frac{\overline{F}\left( sx\right) }{\overline{F}%
\left( x\right) }=s^{-1/\gamma }\text{ and }\lim_{y\rightarrow \infty }\frac{%
\overline{G}\left( sy\right) }{\overline{G}\left( y\right) }=s^{-1/\gamma
_{2}}.  \label{RV-2}
\end{equation}%
Recently \cite{GS2015} addressed the estimation of the extreme value index $%
\gamma _{1}$ under random right-truncation. They used the definition of $%
\gamma $ to derive the following consistent estimator%
\begin{equation}
\widehat{\gamma }_{1}^{GS}:=k^{-1}\frac{\dsum\limits_{i=1}^{k}\log \dfrac{%
X_{n-i+1:n}}{X_{n-k:n}}\dsum\limits_{i=1}^{k}\log \dfrac{Y_{n-i+1:n}}{%
Y_{n-k:n}}}{\dsum\limits_{i=1}^{k}\log \dfrac{X_{n-k:n}Y_{n-i+1:n}}{%
Y_{n-k:n}X_{n-i+1:n}}},  \label{GS}
\end{equation}%
where $X_{1:n}\leq ...\leq X_{n:n}$ and $Y_{1:n}\leq ...\leq Y_{n:n}$ are
the order statistics pertaining to the samples $\left(
X_{1},...,X_{n}\right) $ and $\left( Y_{1},...,Y_{n}\right) $ respectively
and $k=k_{n}$ is a (random) sequence of discrete rv's satisfying $%
k_{N}\rightarrow \infty $ and $k_{N}/N\rightarrow 0$ as $N\rightarrow \infty
.$ The asymptotic normality of $\widehat{\gamma }_{1}^{GS}$ is established
in \cite{BchMN-15a}, under the tail dependence and the second-order regular
variation conditions. Also, \cite{WW2015} proposed an estimator for $\gamma
_{1}$ and proved its asymptotic normality, by considering a Lyden-Bell
integration with a deterministic threshold. More recently, \cite{BchMN-15b}
treated the case of a random threshold and introduced a Hill-type estimator
for the tail index $\gamma _{1}$ of randomly right-truncated data, as
follows:%
\begin{equation}
\widehat{\gamma }_{1}:=\left( \sum\limits_{i=1}^{k}\frac{\mathbf{F}%
_{n}\left( X_{n-i+1:n}\right) }{C_{n}\left( X_{n-i+1:n}\right) }\right)
^{-1}\sum_{i=1}^{k}\frac{\mathbf{F}_{n}\left( X_{n-i+1:n}\right) }{%
C_{n}\left( X_{n-i+1:n}\right) }\log \frac{X_{n-i+1:n}}{X_{n-k:n}},
\label{BMN}
\end{equation}%
where $\mathbf{F}_{n}\left( x\right) :=\prod_{i:X_{i}>x}\exp \left\{ -\dfrac{%
1}{nC_{n}\left( X_{i}\right) }\right\} ,$ is the well-known Woodroofe's
product-limit estimator \citep{W-85} of the underlying df $\mathbf{F}$ and 
\begin{equation}
C_{n}\left( x\right) :=n^{-1}\sum\limits_{i=1}^{n}\mathbf{1}\left( X_{i}\leq
x\leq Y_{i}\right) .  \label{Cn}
\end{equation}%
The asymptotic normality of $\widehat{\gamma }_{1}$ is established by
considering the second-order regular variation conditions $(\ref%
{second-order})$\ and $(\ref{second-orderG})$ below and the assumption $%
\gamma _{1}<\gamma _{2}.$ The latter condition is required in order to
ensure that it remains enough extreme data for the inference to be accurate.
In other words, we consider the situation where the tail of the rv of
interest $\mathbf{X}$ is not too contaminated by that of the truncating rv $%
\mathbf{Y.}$ Note that, in the presence of complete data, we have $\mathbf{F}%
_{n}\mathbf{\equiv }F_{n}\mathbf{\equiv }C_{n}$ and consequently $\widehat{%
\gamma }_{1}$ reduces to the classical Hill estimator \citep{Hill75}. In
this paper, we derive a kernel version of $\widehat{\gamma }_{1}$ in the
spirit of what is called kernel estimator of \cite{CDM}. Thereby, for a
suitable choice of the kernel function, we obtain an improved estimator of $%
\gamma _{1}$ in terms of bias and mean squared error. To this end, let $%
\mathbb{K}:\mathbb{R}\rightarrow \mathbb{R}_{+}$ be a fixed\textbf{\ }%
function, that will be called kernel, satisfying:\medskip 

\noindent $%
\begin{tabular}{ll}
$\left[ \mathbb{C}1\right] $ & $\mathbb{K}$ is non increasing and
right-continuous $\text{on }\mathbb{R};$ \\ 
$\left[ \mathbb{C}2\right] $ & $\mathbb{K}(s)=0$ for $s\notin \left[
0,1\right) $ and $\mathbb{K}(s)\geq 0$ for $s\in \left[ 0,1\right) ;$ \\ 
$\left[ \mathbb{C}3\right] $ & $\int_{\mathbb{R}}\mathbb{K}(s)ds=1;$ \\ 
$\left[ \mathbb{C}4\right] $ & $\mathbb{K}\text{ and its first and second
Lebesgue derivatives }\mathbb{K}^{\prime }\text{ and }\mathbb{K}^{\prime
\prime }\text{ are bounded on }\mathbb{R}.$%
\end{tabular}%
$\medskip

\noindent As examples of such functions \citep[see, e.g.,][]{GLW}, we have
the indicator kernel $\mathbb{K}=\mathbf{1}_{\left[ 0,1\right) }$ and the
biweight and triweight kernels respectively defined by%
\begin{equation}
\mathbb{K}_{2}(s):=\frac{15}{8}\left( 1-s^{2}\right) ^{2}\mathbf{1}_{\left\{
0\leq s<1\right\} },\text{ }\mathbb{K}_{3}(s):=\frac{35}{16}\left(
1-s^{2}\right) ^{3}\mathbf{1}_{\left\{ 0\leq s<1\right\} }.  \label{kernels}
\end{equation}%
For an overview of kernel estimation of the extreme value index with
complete data, one refers to, for instance, \cite{HLM} and \cite{CM-2010}.
By using Potter's inequalities, see e.g. Proposition B.1.10 in \cite{deHF06}%
, to the regularly varying function $\overline{\mathbf{F}}$\ together with
assumptions $\left[ \mathbb{C}1\right] $-$\left[ \mathbb{C}3\right] ,$ we
may readily show that%
\begin{equation}
\lim_{u\rightarrow \infty }\int_{u}^{\infty }x^{-1}\frac{\overline{\mathbf{F}%
}\left( x\right) }{\overline{\mathbf{F}}\left( u\right) }\mathbb{K}\left( 
\frac{\overline{\mathbf{F}}\left( x\right) }{\overline{\mathbf{F}}\left(
u\right) }\right) dx=\gamma _{1}\int_{0}^{\infty }\mathbb{K}(s)ds=\gamma
_{1}.  \label{limit-1}
\end{equation}%
An integration by parts yields%
\begin{equation}
\lim_{u\rightarrow \infty }\frac{1}{\overline{\mathbf{F}}\left( u\right) }%
\int_{u}^{\infty }g_{\mathbb{K}}\left( \frac{\overline{\mathbf{F}}\left(
x\right) }{\overline{\mathbf{F}}\left( u\right) }\right) \log \frac{x}{u}d%
\mathbf{F}\left( x\right) =\gamma _{1},  \label{limit-2}
\end{equation}%
where $g_{\mathbb{K}}$ denotes the Lebesgue derivative of the function $%
s\rightarrow \Psi _{\mathbb{K}}\left( s\right) :=s\mathbb{K}\left( s\right)
. $ Note that, for $\mathbb{K}=\mathbf{1}_{\left[ 0,1\right) },$ we have $g_{%
\mathbb{K}}=\mathbf{1}_{\left[ 0,1\right) },$ then the previous two limits
meet assertion $\left( 1.2.6\right) $ given in Theorem 1.2.2 by \cite{deHF06}%
. For kernels $\mathbb{K}_{2}$ and $\mathbb{K}_{3},$ we have%
\begin{equation}
g_{\mathbb{K}_{2}}(s):=\frac{15}{8}\left( 1-s^{2}\right) \left(
1-5s^{2}\right) \mathbf{1}_{\left\{ 0\leq s<1\right\} },\text{ }g_{\mathbb{K}%
_{3}}(s):=\frac{35}{16}\left( 1-s^{2}\right) ^{2}\left( 1-7s^{2}\right) 
\mathbf{1}_{\left\{ 0\leq s<1\right\} }.  \label{K}
\end{equation}%
Since $\overline{F}$ is regularly varying at infinity with tail index $%
\gamma >0,$ then $X_{n-k:n}$ tends to $\infty $ almost surely. By replacing,
in $(\ref{limit-2}),$ $u$ by $X_{n-k:n}$ and $\mathbf{F}$ by its empirical
counterpart $\mathbf{F}_{n},$ we get%
\begin{equation*}
\widehat{\gamma }_{1,\mathbb{K}}:=\frac{1}{\overline{\mathbf{F}}_{n}\left(
X_{n-k:n}\right) }\int_{X_{n-k:n}}^{\infty }g_{\mathbb{K}}\left( \frac{%
\overline{\mathbf{F}}_{n}\left( x\right) }{\overline{\mathbf{F}}_{n}\left(
X_{n-k:n}\right) }\right) \log \frac{x}{X_{n-k:n}}d\mathbf{F}_{n}\left(
x\right) ,
\end{equation*}%
as a kernel estimator for $\gamma _{1}.$ Next, we give an explicit formula
for $\widehat{\gamma }_{1,\mathbb{K}}.$ Since $\overline{\mathbf{F}}$ and $%
\overline{\mathbf{G}}$ are regularly varying at infinity with tail indices $%
\gamma _{1}>0$ and $\gamma _{2}>0$ respectively, then their right endpoints
are infinite and so they are equal. Hence, from \cite{W-85}, we may write%
\begin{equation}
\int_{x}^{\infty }\frac{d\mathbf{F}\left( y\right) }{\mathbf{F}\left(
y\right) }=\int_{x}^{\infty }\frac{dF\left( y\right) }{C\left( y\right) },
\label{int}
\end{equation}%
where $C\left( z\right) :=\mathbf{P}\left( X\leq z\leq Y\right) $ is the
theoretical counterpart of $C_{n}\left( z\right) $ defined in $(\ref{Cn}).$
Differentiating $\left( \ref{int}\right) $ leads to the following crucial
equation $C\left( x\right) d\mathbf{F}\left( x\right) =\mathbf{F}\left(
x\right) dF\left( x\right) ,$ which implies that%
\begin{equation}
C_{n}\left( x\right) d\mathbf{F}_{n}\left( x\right) =\mathbf{F}_{n}\left(
x\right) dF_{n}\left( x\right) ,  \label{relation}
\end{equation}%
with $F_{n}\left( x\right) :=n^{-1}\sum\limits_{i=1}^{n}\mathbf{1}\left(
X_{i}\leq x\right) $ being the empirical counterpart of $F\left( x\right) .$
This allow us to rewrite $\widehat{\gamma }_{1,\mathbb{K}}$ into 
\begin{equation*}
\widehat{\gamma }_{1,\mathbb{K}}=\frac{1}{\overline{\mathbf{F}}_{n}\left(
X_{n-k:n}\right) }\int_{X_{n-k:n}}^{\infty }\frac{\mathbf{F}_{n}\left(
x\right) }{C_{n}\left( x\right) }g_{\mathbb{K}}\left( \frac{\overline{%
\mathbf{F}}_{n}\left( x\right) }{\overline{\mathbf{F}}_{n}\left(
X_{n-k:n}\right) }\right) \log \frac{x}{X_{n-k:n}}dF_{n}\left( x\right) ,
\end{equation*}%
which is equal to%
\begin{equation*}
\frac{1}{n\overline{\mathbf{F}}_{n}\left( X_{n-k:n}\right) }%
\sum\limits_{i=1}^{k}\dfrac{\mathbf{F}_{n}\left( X_{n-i+1:n}\right) }{%
C_{n}\left( X_{n-i+1:n}\right) }g_{\mathbb{K}}\left( \dfrac{\overline{%
\mathbf{F}}_{n}\left( X_{n-i+1:n}\right) }{\overline{\mathbf{F}}_{n}\left(
X_{n-k:n}\right) }\right) \log \dfrac{X_{n-i+1:n}}{X_{n-k:n}}.
\end{equation*}%
In view of equation $(\ref{relation}),$ \cite{BchMN-15b} showed that%
\begin{equation*}
\overline{\mathbf{F}}_{n}\left( X_{n-k:n}\right) =\frac{1}{n}%
\sum\limits_{i=1}^{k}\dfrac{\mathbf{F}_{n}\left( X_{n-i+1:n}\right) }{%
C_{n}\left( X_{n-i+1:n}\right) }.
\end{equation*}%
Thereby, by setting $a_{n}^{\left( i\right) }:=\mathbf{F}_{n}\left(
X_{n-i+1:n}\right) /C_{n}\left( X_{n-i+1:n}\right) ,$ we end up with the
final formula of our new kernel estimator 
\begin{equation}
\widehat{\gamma }_{1,\mathbb{K}}=\frac{\sum\limits_{i=1}^{k}a_{n}^{\left(
i\right) }g_{\mathbb{K}}\left( \dfrac{\overline{\mathbf{F}}_{n}\left(
X_{n-i+1:n}\right) }{\overline{\mathbf{F}}_{n}\left( X_{n-k:n}\right) }%
\right) \log \dfrac{X_{n-i+1:n}}{X_{n-k:n}}}{\sum\limits_{i=1}^{k}a_{n}^{%
\left( i\right) }}.  \label{kernel-estimator}
\end{equation}%
Note that in the complete data situation, $\mathbf{F}_{n}$ is equal to $%
C_{n} $ and both reduce to the classical empirical df. As a result, we have
in that case $a_{n}^{\left( i\right) }=1$ and $\overline{\mathbf{F}}%
_{n}\left( X_{n-i,n}\right) /\overline{\mathbf{F}}_{n}\left(
X_{n-k,n}\right) =i/k$ meaning that $\widehat{\gamma }_{1,\mathbb{K}%
}=k^{-1}\dsum\nolimits_{i=1}^{k}g_{\mathbb{K}}\left( \dfrac{i-1}{k}\right)
\log \left( X_{n-i+1:n}/X_{n-k:n}\right) .$ By applying the mean value
theorem to function $\Psi _{\mathbb{K}},$ we get 
\begin{equation*}
\dfrac{i}{k}\mathbb{K}\left( \dfrac{i}{k}\right) -\dfrac{i-1}{k}\mathbb{K}%
\left( \dfrac{i-1}{k}\right) =\dfrac{1}{k}g_{\mathbb{K}}\left( \dfrac{i-1}{k}%
\right) +O\left( \dfrac{1}{k^{2}}\right) ,\text{ as }N\rightarrow \infty .
\end{equation*}%
It follows that%
\begin{equation*}
\widehat{\gamma }_{1,\mathbb{K}}=\sum\limits_{i=1}^{k}\left\{ \dfrac{i}{k}%
\mathbb{K}\left( \dfrac{i}{k}\right) -\dfrac{i-1}{k}\mathbb{K}\left( \dfrac{%
i-1}{k}\right) \right\} \log \dfrac{X_{n-i+1:n}}{X_{n-k:n}}+O\left( \dfrac{1%
}{k}\right) \widehat{\gamma }_{1}^{Hill},
\end{equation*}%
where $\widehat{\gamma }_{1}^{Hill}:=k^{-1}\sum\limits_{i=1}^{k}\log \left(
X_{n-i+1:n}/X_{n-k:n}\right) $ is Hill's estimator of the tail index $\gamma
_{1}.$ In view of the consistency of $\widehat{\gamma }_{1}^{Hill}$ %
\citep{Mas82}, we obtain%
\begin{equation*}
\widehat{\gamma }_{1,\mathbb{K}}=\sum\limits_{i=1}^{k}\dfrac{i}{k}\mathbb{K}%
\left( \dfrac{i}{k}\right) \log \dfrac{X_{n-i+1:n}}{X_{n-i:n}}+O_{\mathbf{p}%
}\left( \dfrac{1}{k}\right) ,\text{ as }N\rightarrow \infty ,
\end{equation*}%
which is an approximation of the above talked about CDM's kernel estimator
of the tail index $\gamma _{1}$ with untrucated data. The rest of the paper
is organized as follows. In Section \ref{sec2}, we provide our main result,
namely the asymptotic normality of $\widehat{\gamma }_{1,\mathbb{K}},$ whose
proof is postponed to Section \ref{sec4}. The finite sample behavior of the
proposed estimator is checked by simulation in Section \ref{sec3}, where a
comparison with the aforementioned already existing ones is made as well.
Finally a lemma that is instrumental to the proof is given in the Appendix.

\section{\textbf{Main results\label{sec2}}}

\noindent \noindent It is very well known that, in the context of extreme
value analysis, weak approximations are achieved in the second-order
framework \citep[see, e.g.,
][]{deHS96}. Thus, it seems quite natural to suppose that $\mathbf{F}$\ and $%
\mathbf{G}$\ satisfy the second-order condition of regular variation, which
we express in terms of the tail quantile functions pertaining to both df's.
That is, we assume that for $x>0,$ we have%
\begin{equation}
\underset{t\rightarrow \infty }{\lim }\dfrac{\mathbb{U}_{\mathbf{F}}\left(
tx\right) /\mathbb{U}_{\mathbf{F}}\left( t\right) -x^{\gamma _{1}}}{\mathbf{A%
}_{\mathbf{F}}\left( t\right) }=x^{\gamma _{1}}\dfrac{x^{\tau _{1}}-1}{\tau
_{1}},  \label{second-order}
\end{equation}%
and%
\begin{equation}
\underset{t\rightarrow \infty }{\lim }\dfrac{\mathbb{U}_{\mathbf{G}}\left(
tx\right) /\mathbb{U}_{\mathbf{G}}\left( t\right) -x^{\gamma _{2}}}{\mathbf{A%
}_{\mathbf{G}}\left( t\right) }=x^{\gamma _{2}}\dfrac{x^{\tau _{2}}-1}{\tau
_{2}},  \label{second-orderG}
\end{equation}%
where $\tau _{1},\tau _{2}<0$\ are the second-order parameters and $\mathbf{A%
}_{\mathbf{F}},$ $\mathbf{A}_{\mathbf{G}}$\ are functions tending to zero
and not changing signs near infinity with regularly varying absolute values
at infinity with indices $\tau _{1},$ $\tau _{2}$ respectively. For any df $%
H,$ the function $\mathbb{U}_{H}\left( t\right) :=H^{\leftarrow }\left(
1-1/t\right) ,$ $t>1,$ stands for the tail quantile function, with $%
H^{\leftarrow }\left( u\right) :=\inf \left\{ v:H\left( v\right) \geq
u\right\} ,$ $0<u<1,$ denoting the quantile function. For convenience, we
set $\mathbf{A}_{\mathbf{F}}^{\ast }\left( t\right) :=\mathbf{A}_{\mathbf{F}%
}\left( 1/\overline{\mathbf{F}}\left( \mathbb{U}_{F}\left( t\right) \right)
\right) .$

\begin{theorem}
\label{Theorem1}Assume that the second-order conditions of regular variation 
$(\ref{second-order})$\ and $(\ref{second-orderG})$\ hold with $\gamma
_{1}<\gamma _{2},$ and let $\mathbb{K}$ be a kernel function satisfying
assumptions $\left[ \mathbb{C}1\right] $-$\left[ \mathbb{C}4\right] $ and $%
k_{N}$ an integer sequence such that $k_{N}\rightarrow \infty $ and $%
k_{N}/N\rightarrow 0,$ as $N\rightarrow \infty .$ Then, there exist a
function $\mathbf{A}_{0}\left( t\right) \sim \mathbf{A}_{\mathbf{F}}^{\ast
}\left( t\right) ,$ as $t\rightarrow \infty ,$ and a standard Wiener process 
$\left\{ \mathbf{W}\left( s\right) ;\text{ }s\geq 0\right\} ,$\ defined on
the probability space $\left( \Omega ,\mathcal{A},\mathbf{P}\right) $ such
that%
\begin{eqnarray*}
&&\sqrt{k}\left( \widehat{\gamma }_{1,\mathbb{K}}-\gamma _{1}\right)  \\
&=&\left( \gamma ^{2}/\gamma _{1}\right) \int_{0}^{1}s^{-1}\mathbf{W}\left(
s\right) d\left\{ s\varphi _{\mathbb{K}}\left( s\right) \right\} +\sqrt{k}%
\mathbf{A}_{0}\left( n/k\right) \int_{0}^{1}s^{-\tau _{1}}\mathbb{K}\left(
s\right) ds+o_{\mathbf{P}}\left( 1\right) ,
\end{eqnarray*}%
provided that $\sqrt{k_{N}}\mathbf{A}_{\mathbf{0}}\left( N/k_{N}\right)
=O\left( 1\right) ,$ as $N\rightarrow \infty ,$ where 
\begin{equation*}
\varphi _{\mathbb{K}}\left( s\right) :=s^{-1}\int_{0}^{s}t^{-\gamma /\gamma
_{2}}\left\{ \mathbb{K}\left( t^{\gamma /\gamma _{1}}\right) -\frac{\gamma
_{1}}{\gamma _{2}}t^{-\gamma _{2}/\gamma _{1}}\mathbb{K}\left( t^{\gamma
/\gamma _{1}}\right) +t^{\gamma /\gamma _{1}}\mathbb{K}^{\prime }\left(
t^{\gamma /\gamma _{1}}\right) \right\} dt.
\end{equation*}%
If in addition we suppose that $\sqrt{k_{N}}\mathbf{A}_{\mathbf{0}}\left(
N/k_{N}\right) \rightarrow \lambda ,$ then $\sqrt{k}\left( \widehat{\gamma }%
_{1,\mathbb{K}}-\gamma _{1}\right) \overset{\mathcal{D}}{\rightarrow }%
\mathcal{N}\left( \mu _{\mathbb{K}},\sigma _{\mathbb{K}}^{2}\right) ,$ as $%
N\rightarrow \infty ,$ where $\mu _{\mathbb{K}}:=\lambda
\int_{0}^{1}s^{-\tau _{1}}\mathbb{K}\left( s\right) ds$ and $\sigma _{%
\mathbb{K}}^{2}:=\left( \gamma ^{2}/\gamma _{1}\right)
^{2}\int_{0}^{1}\varphi _{\mathbb{K}}^{2}\left( s\right) ds.$
\end{theorem}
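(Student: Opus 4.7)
The plan is to first reformulate the estimator in a way that isolates the role of the empirical truncated-tail process. Since $g_{\mathbb{K}} = \Psi_{\mathbb{K}}'$ and since [C1]--[C2] imply $\Psi_{\mathbb{K}}(0) = \Psi_{\mathbb{K}}(1) = 0$, an integration by parts applied to the Stieltjes integral defining $\widehat{\gamma}_{1,\mathbb{K}}$, with the smooth factor $\log(x/X_{n-k:n})$, eliminates both boundary terms and produces
\begin{equation*}
\widehat{\gamma}_{1,\mathbb{K}} = \int_{X_{n-k:n}}^{\infty} x^{-1} R_n(x)\,\mathbb{K}\bigl(R_n(x)\bigr)\, dx, \qquad R_n(x) := \frac{\overline{\mathbf{F}}_n(x)}{\overline{\mathbf{F}}_n(X_{n-k:n})},
\end{equation*}
which is the sample analogue of the deterministic identity behind (1.9). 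Inserting and subtracting the population version $R(x) := \overline{\mathbf{F}}(x)/\overline{\mathbf{F}}(X_{n-k:n})$ and Taylor-expanding $\Psi_{\mathbb{K}}$ around $R(x)$ (the quadratic remainder being controlled by the boundedness of $\mathbb{K}''$ under [C4]) yields the decomposition $\sqrt{k}(\widehat{\gamma}_{1,\mathbb{K}} - \gamma_1) = \mathcal{S}_n + \mathcal{B}_n + o_{\mathbf{P}}(1)$, where $\mathcal{S}_n := \sqrt{k}\int x^{-1} g_{\mathbb{K}}(R(x))(R_n(x) - R(x))\,dx$ is the linearized stochastic contribution and $\mathcal{B}_n$ collects the deterministic discrepancy between $\int x^{-1}\Psi_{\mathbb{K}}(R(x))\,dx$ and $\gamma_1$.

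The core probabilistic input is a uniform Gaussian approximation for the tail of Woodroofe's product-limit estimator under right-truncation — the sort of statement presumably encapsulated in the Appendix lemma, in the spirit of the asymptotic representations developed in the authors' earlier work on $\widehat{\gamma}_{1}$ and $\widehat{\gamma}_{1}^{GS}$. On a suitable probability space, such an approximation provides a standard Wiener process $\mathbf{W}$ for which $\sqrt{k}(R_n(x) - R(x))$ admits, uniformly for $x \ge X_{n-k:n}$, an explicit representation as a linear functional of $\mathbf{W}(s)$ evaluated at $s = (x/X_{n-k:n})^{-1/\gamma}$. The prefactor $\gamma^2/\gamma_1$ and the dependence of $\varphi_{\mathbb{K}}$ on both $\gamma_1$ and $\gamma_2$ then arise from the chain rule applied to $\mathbf{F}_n$ through relation (1.6), in which $\overline{F}$ carries the regular-variation index $\gamma = \gamma_1\gamma_2/(\gamma_1+\gamma_2)$ while the weight $C_n$ inherits the index $\gamma_2$ of $\overline{G}$. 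Substituting this representation into $\mathcal{S}_n$, performing the change of variable $s = (x/X_{n-k:n})^{-1/\gamma}$ and then integrating by parts to transfer a derivative onto $s\varphi_{\mathbb{K}}(s)$ delivers the stated form $(\gamma^2/\gamma_1)\int_0^1 s^{-1}\mathbf{W}(s)\, d(s\varphi_{\mathbb{K}}(s))$.

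For $\mathcal{B}_n$, the second-order conditions (1.14)--(1.15) together with Potter's inequalities — applied after the same change of variable and absorbing, into an auxiliary function $\mathbf{A}_0 \sim \mathbf{A}_{\mathbf{F}}^*$, the additional bias caused by replacing $X_{n-k:n}$ with $\mathbb{U}_F(n/k)$ — produce the deterministic term $\sqrt{k}\mathbf{A}_0(n/k)\int_0^1 s^{-\tau_1}\mathbb{K}(s)\,ds$. The variance formula $\sigma_{\mathbb{K}}^2 = (\gamma^2/\gamma_1)^2\int_0^1 \varphi_{\mathbb{K}}^2(s)\,ds$ follows by computing the second moment of the resulting Gaussian integral, equivalently via It\^o's isometry after one last integration by parts. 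The main obstacle will be the uniform Gaussian approximation for $R_n$: this is the step in which the hypothesis $\gamma_1 < \gamma_2$ enters decisively, guaranteeing that the truncating variable $\mathbf{Y}$ does not deplete the extreme tail of $\mathbf{X}$, and in which $\mathbf{F}_n$ and $C_n$ must be controlled jointly in a shrinking neighborhood of the random threshold $X_{n-k:n}$. A secondary technical point is verifying that the quadratic Taylor remainder for $\Psi_{\mathbb{K}}$ integrates to $o_{\mathbf{P}}(k^{-1/2})$ uniformly, particularly near the endpoint $s = 1$ where $\mathbb{K}$ is only right-continuous.
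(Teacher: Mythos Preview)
Your strategy --- integral representation via $\Psi_{\mathbb{K}}$, Taylor linearization, then a uniform Gaussian approximation for the empirical tail ratio --- is exactly the paper's. The one substantive difference is the centering of the Taylor expansion: you expand $\Psi_{\mathbb{K}}$ around the (random) population ratio $R(x)=\overline{\mathbf{F}}(x)/\overline{\mathbf{F}}(X_{n-k:n})$ and then handle the stochastic piece $\mathcal{S}_n$ and the bias $\mathcal{B}_n$ separately, whereas the paper expands directly around the deterministic limit $x^{-1/\gamma_1}$ (after the substitution $x\mapsto xX_{n-k:n}$). The latter is more economical because the weak approximation for the process $\mathbf{D}_n(x):=\sqrt{k}\bigl(\overline{\mathbf{F}}_n(xX_{n-k:n})/\overline{\mathbf{F}}_n(X_{n-k:n})-x^{-1/\gamma_1}\bigr)$, quoted verbatim from the authors' earlier work, already delivers the Gaussian term $\mathbf{\Gamma}(x;\mathbf{W})$ and the second-order bias $x^{-1/\gamma_1}(x^{\tau_1/\gamma_1}-1)(\gamma_1\tau_1)^{-1}\sqrt{k}\,\mathbf{A}_0(n/k)$ in a single stroke; no separate Potter-bound argument for $\mathcal{B}_n$ is needed, and the auxiliary function $\mathbf{A}_0$ comes pre-packaged. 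Your guess about the Appendix lemma is slightly off: it is not the Gaussian approximation itself but a corollary of it, the weighted bound $\sup_{x\ge1}x^{(1/2-\epsilon)/\gamma-1/\gamma_2}|\mathbf{D}_n(x)|=O_{\mathbf{P}}(1)$, used only to dispose of the quadratic Taylor remainder. The variance is then read off via Lemma~8 of Cs\"org\H{o}--Deheuvels--Mason rather than by an explicit It\^o computation, though the content is the same.
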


\begin{remark}
A very large value of $\gamma _{2}$\textbf{\ }yields a $\gamma $-value that
is very close to $\gamma _{1},$ meaning that the really observed sample is
almost the whole dataset. In other words, the complete data case corresponds
to the situation when $1/\gamma _{2}\equiv 0,$\ in which case we have $%
\gamma \equiv \gamma _{1}.$\ It follows that in that case $\varphi _{\mathbb{%
K}}\left( s\right) =\gamma _{1}s^{-1}\int_{0}^{s}\left\{ \mathbb{K}\left(
t\right) +t\mathbb{K}^{\prime }\left( t\right) \right\} dt=\gamma
_{1}s^{-1}\int_{0}^{s}d\left\{ t\mathbb{K}\left( t\right) \right\} =\gamma
_{1}\mathbb{K}\left( s\right) ,$ and therefore $\sigma _{\mathbb{K}%
}^{2}=\gamma _{1}^{2}\int_{0}^{1}\mathbb{K}^{2}\left( s\right) ds,$ which
agrees with the asymptotic variance given in Theorem 1 of \cite{CDM}.
\end{remark}

\section{\textbf{Simulation study\label{sec3}}}

\noindent In this section, we check the finite sample behavior of $\widehat{%
\gamma }_{1,\mathbb{K}}$ and, at the same time, we compare it with $\widehat{%
\gamma }_{1}$ and $\widehat{\gamma }_{1}^{\left( \mathbf{GS}\right) }$
respectively proposed by \cite{BchMN-15b} and \cite{GS2015} and defined in $(%
\ref{BMN})$ and $(\ref{GS}).$ To this end, we consider two sets of truncated
and truncation data, both drawn from Burr's model:%
\begin{equation*}
\overline{\mathbf{F}}\left( x\right) =\left( 1+x^{1/\delta }\right)
^{-\delta /\gamma _{1}},\text{ }\overline{\mathbf{G}}\left( x\right) =\left(
1+x^{1/\delta }\right) ^{-\delta /\gamma _{2}},\text{ }x\geq 0,
\end{equation*}%
where $\delta ,\gamma _{1},\gamma _{2}>0.$ The corresponding percentage of
observed data is equal to $p=\gamma _{2}/(\gamma _{1}+\gamma _{2}).$ We fix $%
\delta =1/4$ and choose the values $0.6$ and $0.8$ for $\gamma _{1}$ and $%
70\%,$ $80\%$ and $90\%$ for $p.$ For each couple $\left( \gamma
_{1},p\right) ,$ we solve the equation $p=\gamma _{2}/(\gamma _{1}+\gamma
_{2})$ to get the pertaining $\gamma _{2}$-value. For the construction of
our estimator $\widehat{\gamma }_{1,\mathbb{K}},$ we select the biweight and
the triweight kernel functions defined in $(\ref{kernels}).$ We vary the
common size $N$ of both samples $\left( \mathbf{X}_{1},...,\mathbf{X}%
_{N}\right) $ and $\left( \mathbf{Y}_{1},...,\mathbf{Y}_{N}\right) ,$ then
for each size, we generate $1000$ independent replicates. Our overall
results are taken as the empirical means of the results obtained through all
repetitions. To determine the optimal number of top statistics used in the
computation of each one of the three estimators, we use the algorithm of 
\cite{ReTo7}, page 137. Our illustration and comparison are made with
respect to the estimators absolute biases (abs bias) and the roots of their
mean squared errors (rmse). We summarize the simulation results in Tables %
\ref{Tab1} and \ref{Tab2} for $\gamma _{1}=0.6$ and in Tables \ref{Tab3} and %
\ref{Tab4} for $\gamma _{1}=0.8.$ In light of all four tables, we first note
that, as expected, the estimation accuracy of all estimators decreases when
the truncation percentage increases. Second, with regard to the bias, the
comparison definitely is in favour of the newly proposed tail index
estimator $\widehat{\gamma }_{1,\mathbb{K}},$ whereas it is not as clear-cut
when the rmse is considered. Indeed, the kernel estimator preforms better
than the other pair as far as small samples are concerned while for large
datasets, it is $\widehat{\gamma }_{1}^{\left( \mathbf{GS}\right) }$ that
seems to have the least rmse but with greater bias. As an overall
conclusion, one may say that, for case studies where not so many data are at
one's disposal, the kernel estimator $\widehat{\gamma }_{1,\mathbb{K}}$ is
the most suitable among the three estimators.

\bigskip\ 
\begin{table}[tbp] \centering%
\begin{tabular}{cccccccc}
\hline
\multicolumn{8}{c}{${\small p=0.7}$} \\ \hline
&  & \multicolumn{2}{||c}{$\widehat{\gamma }_{1,\mathbb{K}}$} & 
\multicolumn{2}{|c}{$\widehat{\gamma }_{1}$} & \multicolumn{2}{|c}{$\widehat{%
\gamma }_{1}^{GS}$} \\ \hline
$N$ & $n$ & \multicolumn{1}{||c}{\small abs bias} & {\small rmse} & 
\multicolumn{1}{|c}{\small abs bias} & {\small rmse} & \multicolumn{1}{|c}%
{\small abs bias} & {\small rmse} \\ \hline\hline
\multicolumn{1}{r}{$150$} & \multicolumn{1}{r}{$104$} & \multicolumn{1}{||c}{%
$0.073$} & $0.665$ & \multicolumn{1}{|c}{$0.133$} & $0.408$ & 
\multicolumn{1}{|c}{$0.136$} & $3.341$ \\ 
\multicolumn{1}{r}{$200$} & \multicolumn{1}{r}{$140$} & \multicolumn{1}{||c}{%
$0.008$} & $0.614$ & \multicolumn{1}{|c}{$0.152$} & $0.392$ & 
\multicolumn{1}{|c}{$0.258$} & $1.647$ \\ 
\multicolumn{1}{r}{$300$} & \multicolumn{1}{r}{$210$} & \multicolumn{1}{||c}{%
$0.003$} & $0.467$ & \multicolumn{1}{|c}{$0.095$} & $0.321$ & 
\multicolumn{1}{|c}{$0.102$} & $0.962$ \\ 
\multicolumn{1}{r}{$500$} & \multicolumn{1}{r}{$349$} & \multicolumn{1}{||c}{%
$0.007$} & $0.439$ & \multicolumn{1}{|c}{$0.063$} & $0.296$ & 
\multicolumn{1}{|c}{$0.022$} & $0.409$ \\ 
\multicolumn{1}{r}{$1000$} & \multicolumn{1}{r}{$699$} & 
\multicolumn{1}{||c}{$0.020$} & $0.284$ & \multicolumn{1}{|c}{$0.042$} & $%
0.210$ & \multicolumn{1}{|c}{$0.023$} & $0.211$ \\ 
\multicolumn{1}{r}{$1500$} & \multicolumn{1}{r}{$1049$} & 
\multicolumn{1}{||c}{$0.009$} & $0.255$ & \multicolumn{1}{|c}{$0.024$} & $%
0.189$ & \multicolumn{1}{|c}{$0.013$} & $0.142$ \\ 
\multicolumn{1}{r}{$2000$} & \multicolumn{1}{r}{$1399$} & 
\multicolumn{1}{||c}{$0.011$} & $0.245$ & \multicolumn{1}{|c}{$0.018$} & $%
0.177$ & \multicolumn{1}{|c}{$0.013$} & $0.116$ \\ \hline\hline
\multicolumn{8}{c}{${\small p=0.8}$} \\ \hline
\multicolumn{1}{r}{$150$} & \multicolumn{1}{r}{$120$} & \multicolumn{1}{||c}{%
$0.054$} & $0.608$ & \multicolumn{1}{|c}{$0.093$} & $0.398$ & 
\multicolumn{1}{|c}{$0.100$} & $0.989$ \\ 
\multicolumn{1}{r}{$200$} & \multicolumn{1}{r}{$160$} & \multicolumn{1}{||c}{%
$0.030$} & $0.520$ & \multicolumn{1}{|c}{$0.085$} & $0.353$ & 
\multicolumn{1}{|c}{$0.109$} & $0.488$ \\ 
\multicolumn{1}{r}{$300$} & \multicolumn{1}{r}{$239$} & \multicolumn{1}{||c}{%
$0.022$} & $0.467$ & \multicolumn{1}{|c}{$0.067$} & $0.322$ & 
\multicolumn{1}{|c}{$0.069$} & $0.353$ \\ 
\multicolumn{1}{r}{$500$} & \multicolumn{1}{r}{$399$} & \multicolumn{1}{||c}{%
$0.002$} & $0.340$ & \multicolumn{1}{|c}{$0.049$} & $0.240$ & 
\multicolumn{1}{|c}{$0.040$} & $0.196$ \\ 
\multicolumn{1}{r}{$1000$} & \multicolumn{1}{r}{$799$} & 
\multicolumn{1}{||c}{$0.013$} & $0.217$ & \multicolumn{1}{|c}{$0.033$} & $%
0.168$ & \multicolumn{1}{|c}{$0.029$} & $0.135$ \\ 
\multicolumn{1}{r}{$1500$} & \multicolumn{1}{r}{$1199$} & 
\multicolumn{1}{||c}{$0.003$} & $0.190$ & \multicolumn{1}{|c}{$0.017$} & $%
0.140$ & \multicolumn{1}{|c}{$0.019$} & $0.109$ \\ 
\multicolumn{1}{r}{$2000$} & \multicolumn{1}{r}{$1599$} & 
\multicolumn{1}{||c}{$0.005$} & $0.149$ & \multicolumn{1}{|c}{$0.011$} & $%
0.113$ & \multicolumn{1}{|c}{$0.005$} & $0.095$ \\ \hline\hline
\multicolumn{8}{c}{${\small p=0.9}$} \\ \hline
\multicolumn{1}{r}{$150$} & \multicolumn{1}{r}{$134$} & \multicolumn{1}{||c}{%
$0.031$} & $0.492$ & \multicolumn{1}{|c}{$0.082$} & $0.387$ & 
\multicolumn{1}{|c}{$0.149$} & $2.740$ \\ 
\multicolumn{1}{r}{$200$} & \multicolumn{1}{r}{$180$} & \multicolumn{1}{||c}{%
$0.019$} & $0.404$ & \multicolumn{1}{|c}{$0.069$} & $0.313$ & 
\multicolumn{1}{|c}{$0.072$} & $0.334$ \\ 
\multicolumn{1}{r}{$300$} & \multicolumn{1}{r}{$270$} & \multicolumn{1}{||c}{%
$0.016$} & $0.299$ & \multicolumn{1}{|c}{$0.051$} & $0.238$ & 
\multicolumn{1}{|c}{$0.043$} & $0.231$ \\ 
\multicolumn{1}{r}{$500$} & \multicolumn{1}{r}{$449$} & \multicolumn{1}{||c}{%
$0.002$} & $0.236$ & \multicolumn{1}{|c}{$0.045$} & $0.176$ & 
\multicolumn{1}{|c}{$0.037$} & $0.160$ \\ 
\multicolumn{1}{r}{$1000$} & \multicolumn{1}{r}{$899$} & 
\multicolumn{1}{||c}{$0.006$} & $0.163$ & \multicolumn{1}{|c}{$0.024$} & $%
0.131$ & \multicolumn{1}{|c}{$0.020$} & $0.123$ \\ 
\multicolumn{1}{r}{$1500$} & \multicolumn{1}{r}{$1350$} & 
\multicolumn{1}{||c}{$0.010$} & $0.131$ & \multicolumn{1}{|c}{$0.021$} & $%
0.103$ & \multicolumn{1}{|c}{$0.018$} & $0.093$ \\ 
\multicolumn{1}{r}{$2000$} & \multicolumn{1}{r}{$1799$} & 
\multicolumn{1}{||c}{$0.002$} & $0.116$ & \multicolumn{1}{|c}{$0.010$} & $%
0.088$ & \multicolumn{1}{|c}{$0.009$} & $0.078$ \\ \hline
&  &  &  &  &  &  & 
\end{tabular}%
\caption{Biweight-kernel estimation results for the shape parameter
$\gamma_{1}=0.6$ of Burr's model based on 1000 right-truncated samples,
along with other existing estimators}\label{Tab1}%
\end{table}%

\begin{table}[tbp] \centering%
\begin{tabular}{cccccccc}
\hline
\multicolumn{8}{c}{${\small p=0.7}$} \\ \hline
&  & \multicolumn{2}{||c}{$\widehat{\gamma }_{1,\mathbb{K}}$} & 
\multicolumn{2}{|c}{$\widehat{\gamma }_{1}$} & \multicolumn{2}{|c}{$\widehat{%
\gamma }_{1}^{GS}$} \\ \hline
$N$ & $n$ & \multicolumn{1}{||c}{\small abs bias} & {\small rmse} & 
\multicolumn{1}{|c}{\small abs bias} & {\small rmse} & \multicolumn{1}{|c}%
{\small abs bias} & {\small rmse} \\ \hline\hline
\multicolumn{1}{r}{$150$} & \multicolumn{1}{r}{$104$} & \multicolumn{1}{||c}{%
$0.134$} & $0.808$ & \multicolumn{1}{|c}{$0.142$} & $0.408$ & 
\multicolumn{1}{|c}{$0.245$} & $1.242$ \\ 
\multicolumn{1}{r}{$200$} & \multicolumn{1}{r}{$139$} & \multicolumn{1}{||c}{%
$0.097$} & $0.705$ & \multicolumn{1}{|c}{$0.129$} & $0.373$ & 
\multicolumn{1}{|c}{$0.184$} & $0.857$ \\ 
\multicolumn{1}{r}{$300$} & \multicolumn{1}{r}{$209$} & \multicolumn{1}{||c}{%
$0.045$} & $0.566$ & \multicolumn{1}{|c}{$0.090$} & $0.313$ & 
\multicolumn{1}{|c}{$0.091$} & $0.582$ \\ 
\multicolumn{1}{r}{$500$} & \multicolumn{1}{r}{$349$} & \multicolumn{1}{||c}{%
$0.002$} & $0.430$ & \multicolumn{1}{|c}{$0.074$} & $0.268$ & 
\multicolumn{1}{|c}{$0.064$} & $0.550$ \\ 
\multicolumn{1}{r}{$1000$} & \multicolumn{1}{r}{$699$} & 
\multicolumn{1}{||c}{$0.003$} & $0.399$ & \multicolumn{1}{|c}{$0.031$} & $%
0.237$ & \multicolumn{1}{|c}{$0.023$} & $0.161$ \\ 
\multicolumn{1}{r}{$1500$} & \multicolumn{1}{r}{$1050$} & 
\multicolumn{1}{||c}{$0.010$} & $0.362$ & \multicolumn{1}{|c}{$0.013$} & $%
0.217$ & \multicolumn{1}{|c}{$0.010$} & $0.130$ \\ 
\multicolumn{1}{r}{$2000$} & \multicolumn{1}{r}{$1401$} & 
\multicolumn{1}{||c}{$0.010$} & $0.244$ & \multicolumn{1}{|c}{$0.018$} & $%
0.164$ & \multicolumn{1}{|c}{$0.009$} & $0.117$ \\ \hline\hline
\multicolumn{8}{c}{${\small p=0.8}$} \\ \hline
\multicolumn{1}{r}{$150$} & \multicolumn{1}{r}{$119$} & \multicolumn{1}{||c}{%
$0.096$} & $0.730$ & \multicolumn{1}{|c}{$0.109$} & $0.397$ & 
\multicolumn{1}{|c}{$0.117$} & $0.729$ \\ 
\multicolumn{1}{r}{$200$} & \multicolumn{1}{r}{$159$} & \multicolumn{1}{||c}{%
$0.060$} & $0.580$ & \multicolumn{1}{|c}{$0.091$} & $0.340$ & 
\multicolumn{1}{|c}{$0.108$} & $0.874$ \\ 
\multicolumn{1}{r}{$300$} & \multicolumn{1}{r}{$239$} & \multicolumn{1}{||c}{%
$0.037$} & $0.496$ & \multicolumn{1}{|c}{$0.067$} & $0.315$ & 
\multicolumn{1}{|c}{$0.080$} & $0.490$ \\ 
\multicolumn{1}{r}{$500$} & \multicolumn{1}{r}{$399$} & \multicolumn{1}{||c}{%
$0.009$} & $0.303$ & \multicolumn{1}{|c}{$0.057$} & $0.231$ & 
\multicolumn{1}{|c}{$0.047$} & $0.280$ \\ 
\multicolumn{1}{r}{$1000$} & \multicolumn{1}{r}{$799$} & 
\multicolumn{1}{||c}{$0.001$} & $0.265$ & \multicolumn{1}{|c}{$0.027$} & $%
0.177$ & \multicolumn{1}{|c}{$0.021$} & $0.139$ \\ 
\multicolumn{1}{r}{$1500$} & \multicolumn{1}{r}{$1199$} & 
\multicolumn{1}{||c}{$0.008$} & $0.194$ & \multicolumn{1}{|c}{$0.018$} & $%
0.139$ & \multicolumn{1}{|c}{$0.015$} & $0.109$ \\ 
\multicolumn{1}{r}{$2000$} & \multicolumn{1}{r}{$1600$} & 
\multicolumn{1}{||c}{$0.001$} & $0.183$ & \multicolumn{1}{|c}{$0.013$} & $%
0.124$ & \multicolumn{1}{|c}{$0.012$} & $0.095$ \\ \hline\hline
\multicolumn{8}{c}{${\small p=0.9}$} \\ \hline
\multicolumn{1}{r}{$150$} & \multicolumn{1}{r}{$134$} & \multicolumn{1}{||c}{%
$0.066$} & $0.660$ & \multicolumn{1}{|c}{$0.080$} & $0.392$ & 
\multicolumn{1}{|c}{$0.081$} & $0.450$ \\ 
\multicolumn{1}{r}{$200$} & \multicolumn{1}{r}{$179$} & \multicolumn{1}{||c}{%
$0.047$} & $0.454$ & \multicolumn{1}{|c}{$0.061$} & $0.314$ & 
\multicolumn{1}{|c}{$0.061$} & $0.359$ \\ 
\multicolumn{1}{r}{$300$} & \multicolumn{1}{r}{$270$} & \multicolumn{1}{||c}{%
$0.003$} & $0.299$ & \multicolumn{1}{|c}{$0.064$} & $0.243$ & 
\multicolumn{1}{|c}{$0.062$} & $0.230$ \\ 
\multicolumn{1}{r}{$500$} & \multicolumn{1}{r}{$449$} & \multicolumn{1}{||c}{%
$0.001$} & $0.226$ & \multicolumn{1}{|c}{$0.043$} & $0.174$ & 
\multicolumn{1}{|c}{$0.037$} & $0.164$ \\ 
\multicolumn{1}{r}{$1000$} & \multicolumn{1}{r}{$899$} & 
\multicolumn{1}{||c}{$0.009$} & $0.175$ & \multicolumn{1}{|c}{$0.016$} & $%
0.124$ & \multicolumn{1}{|c}{$0.014$} & $0.113$ \\ 
\multicolumn{1}{r}{$1500$} & \multicolumn{1}{r}{$1350$} & 
\multicolumn{1}{||c}{$0.002$} & $0.146$ & \multicolumn{1}{|c}{$0.017$} & $%
0.108$ & \multicolumn{1}{|c}{$0.017$} & $0.098$ \\ 
\multicolumn{1}{r}{$2000$} & \multicolumn{1}{r}{$1799$} & 
\multicolumn{1}{||c}{$0.003$} & $0.134$ & \multicolumn{1}{|c}{$0.010$} & $%
0.093$ & \multicolumn{1}{|c}{$0.008$} & $0.081$ \\ \hline
&  &  &  &  &  &  & 
\end{tabular}%
\caption{Triweight-kernel estimation results for the shape parameter
$\gamma_{1}=0.6$ of Burr's model based on 1000 right-truncated samples,
along with other existing estimators}\label{Tab2}%
\end{table}%

\begin{table}[tbp] \centering%
\begin{tabular}{cccccccc}
\hline
\multicolumn{8}{c}{${\small p=0.7}$} \\ \hline
&  & \multicolumn{2}{||c}{$\widehat{\gamma }_{1,\mathbb{K}}$} & 
\multicolumn{2}{|c}{$\widehat{\gamma }_{1}$} & \multicolumn{2}{|c}{$\widehat{%
\gamma }_{1}^{GS}$} \\ \hline
$N$ & $n$ & \multicolumn{1}{||c}{\small abs bias} & {\small rmse} & 
\multicolumn{1}{|c}{\small abs bias} & {\small rmse} & \multicolumn{1}{|c}%
{\small abs bias} & {\small rmse} \\ \hline\hline
\multicolumn{1}{r}{$150$} & \multicolumn{1}{r}{$105$} & \multicolumn{1}{||r}{%
$0.090$} & \multicolumn{1}{r}{$0.893$} & \multicolumn{1}{|r}{$0.187$} & 
\multicolumn{1}{r}{$0.548$} & \multicolumn{1}{|r}{$0.294$} & 
\multicolumn{1}{r}{$2.126$} \\ 
\multicolumn{1}{r}{$200$} & \multicolumn{1}{r}{$139$} & \multicolumn{1}{||r}{%
$0.014$} & \multicolumn{1}{r}{$0.863$} & \multicolumn{1}{|r}{$0.199$} & 
\multicolumn{1}{r}{$0.542$} & \multicolumn{1}{|r}{$0.316$} & 
\multicolumn{1}{r}{$1.351$} \\ 
\multicolumn{1}{r}{$300$} & \multicolumn{1}{r}{$210$} & \multicolumn{1}{||r}{%
$0.022$} & \multicolumn{1}{r}{$0.573$} & \multicolumn{1}{|r}{$0.140$} & 
\multicolumn{1}{r}{$0.412$} & \multicolumn{1}{|r}{$0.173$} & 
\multicolumn{1}{r}{$0.812$} \\ 
\multicolumn{1}{r}{$500$} & \multicolumn{1}{r}{$349$} & \multicolumn{1}{||r}{%
$0.031$} & \multicolumn{1}{r}{$0.519$} & \multicolumn{1}{|r}{$0.103$} & 
\multicolumn{1}{r}{$0.372$} & \multicolumn{1}{|r}{$0.053$} & 
\multicolumn{1}{r}{$0.593$} \\ 
\multicolumn{1}{r}{$1000$} & \multicolumn{1}{r}{$699$} & 
\multicolumn{1}{||r}{$0.004$} & \multicolumn{1}{r}{$0.462$} & 
\multicolumn{1}{|r}{$0.042$} & \multicolumn{1}{r}{$0.324$} & 
\multicolumn{1}{|r}{$0.020$} & \multicolumn{1}{r}{$0.253$} \\ 
\multicolumn{1}{r}{$1500$} & \multicolumn{1}{r}{$1049$} & 
\multicolumn{1}{||r}{$0.017$} & \multicolumn{1}{r}{$0.356$} & 
\multicolumn{1}{|r}{$0.031$} & \multicolumn{1}{r}{$0.255$} & 
\multicolumn{1}{|r}{$0.020$} & \multicolumn{1}{r}{$0.174$} \\ 
\multicolumn{1}{r}{$2000$} & \multicolumn{1}{r}{$1399$} & 
\multicolumn{1}{||r}{$0.008$} & \multicolumn{1}{r}{$0.424$} & 
\multicolumn{1}{|r}{$0.017$} & \multicolumn{1}{r}{$0.267$} & 
\multicolumn{1}{|r}{$0.017$} & \multicolumn{1}{r}{$0.150$} \\ \hline\hline
\multicolumn{8}{c}{${\small p=0.8}$} \\ \hline
\multicolumn{1}{r}{$150$} & \multicolumn{1}{r}{$120$} & \multicolumn{1}{||r}{%
$0.088$} & \multicolumn{1}{r}{$0.862$} & \multicolumn{1}{|r}{$0.122$} & 
\multicolumn{1}{r}{$0.553$} & \multicolumn{1}{|r}{$0.248$} & 
\multicolumn{1}{r}{$1.947$} \\ 
\multicolumn{1}{r}{$200$} & \multicolumn{1}{r}{$159$} & \multicolumn{1}{||r}{%
$0.040$} & \multicolumn{1}{r}{$0.684$} & \multicolumn{1}{|r}{$0.121$} & 
\multicolumn{1}{r}{$0.472$} & \multicolumn{1}{|r}{$0.178$} & 
\multicolumn{1}{r}{$1.143$} \\ 
\multicolumn{1}{r}{$300$} & \multicolumn{1}{r}{$239$} & \multicolumn{1}{||r}{%
$0.006$} & \multicolumn{1}{r}{$0.516$} & \multicolumn{1}{|r}{$0.084$} & 
\multicolumn{1}{r}{$0.406$} & \multicolumn{1}{|r}{$0.099$} & 
\multicolumn{1}{r}{$0.494$} \\ 
\multicolumn{1}{r}{$500$} & \multicolumn{1}{r}{$399$} & \multicolumn{1}{||r}{%
$0.022$} & \multicolumn{1}{r}{$0.372$} & \multicolumn{1}{|r}{$0.078$} & 
\multicolumn{1}{r}{$0.285$} & \multicolumn{1}{|r}{$0.058$} & 
\multicolumn{1}{r}{$0.247$} \\ 
\multicolumn{1}{r}{$1000$} & \multicolumn{1}{r}{$800$} & 
\multicolumn{1}{||r}{$0.003$} & \multicolumn{1}{r}{$0.297$} & 
\multicolumn{1}{|r}{$0.029$} & \multicolumn{1}{r}{$0.221$} & 
\multicolumn{1}{|r}{$0.021$} & \multicolumn{1}{r}{$0.189$} \\ 
\multicolumn{1}{r}{$1500$} & \multicolumn{1}{r}{$1199$} & 
\multicolumn{1}{||r}{$0.004$} & \multicolumn{1}{r}{$0.239$} & 
\multicolumn{1}{|r}{$0.020$} & \multicolumn{1}{r}{$0.180$} & 
\multicolumn{1}{|r}{$0.012$} & \multicolumn{1}{r}{$0.157$} \\ 
\multicolumn{1}{r}{$2000$} & \multicolumn{1}{r}{$1599$} & 
\multicolumn{1}{||r}{$0.001$} & \multicolumn{1}{r}{$0.209$} & 
\multicolumn{1}{|r}{$0.013$} & \multicolumn{1}{r}{$0.156$} & 
\multicolumn{1}{|r}{$0.014$} & \multicolumn{1}{r}{$0.121$} \\ \hline\hline
\multicolumn{8}{c}{${\small p=0.9}$} \\ \hline
\multicolumn{1}{r}{$150$} & \multicolumn{1}{r}{$134$} & \multicolumn{1}{||r}{%
$0.034$} & \multicolumn{1}{r}{$0.585$} & \multicolumn{1}{|r}{$0.113$} & 
\multicolumn{1}{r}{$0.479$} & \multicolumn{1}{|r}{$0.118$} & 
\multicolumn{1}{r}{$0.543$} \\ 
\multicolumn{1}{r}{$200$} & \multicolumn{1}{r}{$180$} & \multicolumn{1}{||r}{%
$0.002$} & \multicolumn{1}{r}{$0.512$} & \multicolumn{1}{|r}{$0.120$} & 
\multicolumn{1}{r}{$0.402$} & \multicolumn{1}{|r}{$0.127$} & 
\multicolumn{1}{r}{$0.459$} \\ 
\multicolumn{1}{r}{$300$} & \multicolumn{1}{r}{$270$} & \multicolumn{1}{||r}{%
$0.003$} & \multicolumn{1}{r}{$0.389$} & \multicolumn{1}{|r}{$0.082$} & 
\multicolumn{1}{r}{$0.320$} & \multicolumn{1}{|r}{$0.073$} & 
\multicolumn{1}{r}{$0.310$} \\ 
\multicolumn{1}{r}{$500$} & \multicolumn{1}{r}{$450$} & \multicolumn{1}{||r}{%
$0.002$} & \multicolumn{1}{r}{$0.305$} & \multicolumn{1}{|r}{$0.052$} & 
\multicolumn{1}{r}{$0.246$} & \multicolumn{1}{|r}{$0.045$} & 
\multicolumn{1}{r}{$0.228$} \\ 
\multicolumn{1}{r}{$1000$} & \multicolumn{1}{r}{$900$} & 
\multicolumn{1}{||r}{$0.004$} & \multicolumn{1}{r}{$0.223$} & 
\multicolumn{1}{|r}{$0.024$} & \multicolumn{1}{r}{$0.169$} & 
\multicolumn{1}{|r}{$0.020$} & \multicolumn{1}{r}{$0.153$} \\ 
\multicolumn{1}{r}{$1500$} & \multicolumn{1}{r}{$1349$} & 
\multicolumn{1}{||r}{$0.005$} & \multicolumn{1}{r}{$0.176$} & 
\multicolumn{1}{|r}{$0.020$} & \multicolumn{1}{r}{$0.141$} & 
\multicolumn{1}{|r}{$0.021$} & \multicolumn{1}{r}{$0.124$} \\ 
\multicolumn{1}{r}{$2000$} & \multicolumn{1}{r}{$1800$} & 
\multicolumn{1}{||r}{$0.006$} & \multicolumn{1}{r}{$0.166$} & 
\multicolumn{1}{|r}{$0.013$} & \multicolumn{1}{r}{$0.126$} & 
\multicolumn{1}{|r}{$0.013$} & \multicolumn{1}{r}{$0.110$} \\ \hline
&  &  &  &  &  &  & 
\end{tabular}%
\caption{Biweight-kernel estimation results for the shape parameter
$\gamma_{1}=0.8$ of Burr's model based on 1000 right-truncated samples,
along with other existing estimators}\label{Tab3}%
\end{table}%

\begin{table}[tbp] \centering%
\begin{tabular}{cccccccc}
\hline
\multicolumn{8}{c}{${\small p=0.7}$} \\ \hline
&  & \multicolumn{2}{||c}{$\widehat{\gamma }_{1,\mathbb{K}}$} & 
\multicolumn{2}{|c}{$\widehat{\gamma }_{1}$} & \multicolumn{2}{|c}{$\widehat{%
\gamma }_{1}^{GS}$} \\ \hline
$N$ & $n$ & \multicolumn{1}{||c}{\small abs bias} & {\small rmse} & 
\multicolumn{1}{|c}{\small abs bias} & {\small rmse} & \multicolumn{1}{|c}%
{\small abs bias} & {\small rmse} \\ \hline\hline
\multicolumn{1}{r}{$150$} & \multicolumn{1}{r}{$104$} & \multicolumn{1}{||r}{%
$0.159$} & \multicolumn{1}{r}{$0.976$} & \multicolumn{1}{|r}{$0.202$} & 
\multicolumn{1}{r}{$0.511$} & \multicolumn{1}{|r}{$0.386$} & 
\multicolumn{1}{r}{$3.264$} \\ 
\multicolumn{1}{r}{$200$} & \multicolumn{1}{r}{$139$} & \multicolumn{1}{||r}{%
$0.064$} & \multicolumn{1}{r}{$0.905$} & \multicolumn{1}{|r}{$0.205$} & 
\multicolumn{1}{r}{$0.493$} & \multicolumn{1}{|r}{$0.247$} & 
\multicolumn{1}{r}{$1.355$} \\ 
\multicolumn{1}{r}{$300$} & \multicolumn{1}{r}{$209$} & \multicolumn{1}{||r}{%
$0.090$} & \multicolumn{1}{r}{$0.831$} & \multicolumn{1}{|r}{$0.101$} & 
\multicolumn{1}{r}{$0.469$} & \multicolumn{1}{|r}{$0.141$} & 
\multicolumn{1}{r}{$1.082$} \\ 
\multicolumn{1}{r}{$500$} & \multicolumn{1}{r}{$349$} & \multicolumn{1}{||r}{%
$0.014$} & \multicolumn{1}{r}{$0.589$} & \multicolumn{1}{|r}{$0.090$} & 
\multicolumn{1}{r}{$0.371$} & \multicolumn{1}{|r}{$0.063$} & 
\multicolumn{1}{r}{$0.586$} \\ 
\multicolumn{1}{r}{$1000$} & \multicolumn{1}{r}{$700$} & 
\multicolumn{1}{||r}{$0.013$} & \multicolumn{1}{r}{$0.458$} & 
\multicolumn{1}{|r}{$0.049$} & \multicolumn{1}{r}{$0.296$} & 
\multicolumn{1}{|r}{$0.023$} & \multicolumn{1}{r}{$0.264$} \\ 
\multicolumn{1}{r}{$1500$} & \multicolumn{1}{r}{$1050$} & 
\multicolumn{1}{||r}{$0.008$} & \multicolumn{1}{r}{$0.561$} & 
\multicolumn{1}{|r}{$0.023$} & \multicolumn{1}{r}{$0.315$} & 
\multicolumn{1}{|r}{$0.020$} & \multicolumn{1}{r}{$0.189$} \\ 
\multicolumn{1}{r}{$2000$} & \multicolumn{1}{r}{$1400$} & 
\multicolumn{1}{||r}{$0.012$} & \multicolumn{1}{r}{$0.381$} & 
\multicolumn{1}{|r}{$0.027$} & \multicolumn{1}{r}{$0.241$} & 
\multicolumn{1}{|r}{$0.013$} & \multicolumn{1}{r}{$0.164$} \\ \hline\hline
\multicolumn{8}{c}{${\small p=0.8}$} \\ \hline
\multicolumn{1}{r}{$150$} & \multicolumn{1}{r}{$120$} & \multicolumn{1}{||r}{%
$0.103$} & \multicolumn{1}{r}{$0.886$} & \multicolumn{1}{|r}{$0.151$} & 
\multicolumn{1}{r}{$0.511$} & \multicolumn{1}{|r}{$0.180$} & 
\multicolumn{1}{r}{$1.906$} \\ 
\multicolumn{1}{r}{$200$} & \multicolumn{1}{r}{$160$} & \multicolumn{1}{||r}{%
$0.058$} & \multicolumn{1}{r}{$0.775$} & \multicolumn{1}{|r}{$0.131$} & 
\multicolumn{1}{r}{$0.466$} & \multicolumn{1}{|r}{$0.153$} & 
\multicolumn{1}{r}{$1.311$} \\ 
\multicolumn{1}{r}{$300$} & \multicolumn{1}{r}{$239$} & \multicolumn{1}{||r}{%
$0.023$} & \multicolumn{1}{r}{$0.629$} & \multicolumn{1}{|r}{$0.106$} & 
\multicolumn{1}{r}{$0.398$} & \multicolumn{1}{|r}{$0.078$} & 
\multicolumn{1}{r}{$0.502$} \\ 
\multicolumn{1}{r}{$500$} & \multicolumn{1}{r}{$399$} & \multicolumn{1}{||r}{%
$0.005$} & \multicolumn{1}{r}{$0.515$} & \multicolumn{1}{|r}{$0.069$} & 
\multicolumn{1}{r}{$0.339$} & \multicolumn{1}{|r}{$0.060$} & 
\multicolumn{1}{r}{$0.256$} \\ 
\multicolumn{1}{r}{$1000$} & \multicolumn{1}{r}{$800$} & 
\multicolumn{1}{||r}{$0.005$} & \multicolumn{1}{r}{$0.330$} & 
\multicolumn{1}{|r}{$0.036$} & \multicolumn{1}{r}{$0.226$} & 
\multicolumn{1}{|r}{$0.030$} & \multicolumn{1}{r}{$0.186$} \\ 
\multicolumn{1}{r}{$1500$} & \multicolumn{1}{r}{$1200$} & 
\multicolumn{1}{||r}{$0.017$} & \multicolumn{1}{r}{$0.242$} & 
\multicolumn{1}{|r}{$0.035$} & \multicolumn{1}{r}{$0.176$} & 
\multicolumn{1}{|r}{$0.029$} & \multicolumn{1}{r}{$0.145$} \\ 
\multicolumn{1}{r}{$2000$} & \multicolumn{1}{r}{$1600$} & 
\multicolumn{1}{||r}{$0.001$} & \multicolumn{1}{r}{$0.225$} & 
\multicolumn{1}{|r}{$0.017$} & \multicolumn{1}{r}{$0.160$} & 
\multicolumn{1}{|r}{$0.012$} & \multicolumn{1}{r}{$0.133$} \\ \hline\hline
\multicolumn{8}{c}{${\small p=0.9}$} \\ \hline
\multicolumn{1}{r}{$150$} & \multicolumn{1}{r}{$135$} & \multicolumn{1}{||r}{%
$0.039$} & \multicolumn{1}{r}{$0.611$} & \multicolumn{1}{|r}{$0.117$} & 
\multicolumn{1}{r}{$0.465$} & \multicolumn{1}{|r}{$0.133$} & 
\multicolumn{1}{r}{$1.103$} \\ 
\multicolumn{1}{r}{$200$} & \multicolumn{1}{r}{$180$} & \multicolumn{1}{||r}{%
$0.047$} & \multicolumn{1}{r}{$0.603$} & \multicolumn{1}{|r}{$0.102$} & 
\multicolumn{1}{r}{$0.435$} & \multicolumn{1}{|r}{$0.127$} & 
\multicolumn{1}{r}{$0.845$} \\ 
\multicolumn{1}{r}{$300$} & \multicolumn{1}{r}{$270$} & \multicolumn{1}{||r}{%
$0.020$} & \multicolumn{1}{r}{$0.414$} & \multicolumn{1}{|r}{$0.078$} & 
\multicolumn{1}{r}{$0.308$} & \multicolumn{1}{|r}{$0.071$} & 
\multicolumn{1}{r}{$0.301$} \\ 
\multicolumn{1}{r}{$500$} & \multicolumn{1}{r}{$449$} & \multicolumn{1}{||r}{%
$0.008$} & \multicolumn{1}{r}{$0.321$} & \multicolumn{1}{|r}{$0.049$} & 
\multicolumn{1}{r}{$0.256$} & \multicolumn{1}{|r}{$0.050$} & 
\multicolumn{1}{r}{$0.223$} \\ 
\multicolumn{1}{r}{$1000$} & \multicolumn{1}{r}{$900$} & 
\multicolumn{1}{||r}{$0.011$} & \multicolumn{1}{r}{$0.230$} & 
\multicolumn{1}{|r}{$0.024$} & \multicolumn{1}{r}{$0.173$} & 
\multicolumn{1}{|r}{$0.020$} & \multicolumn{1}{r}{$0.153$} \\ 
\multicolumn{1}{r}{$1500$} & \multicolumn{1}{r}{$1350$} & 
\multicolumn{1}{||r}{$0.008$} & \multicolumn{1}{r}{$0.197$} & 
\multicolumn{1}{|r}{$0.016$} & \multicolumn{1}{r}{$0.137$} & 
\multicolumn{1}{|r}{$0.015$} & \multicolumn{1}{r}{$0.120$} \\ 
\multicolumn{1}{r}{$2000$} & \multicolumn{1}{r}{$1800$} & 
\multicolumn{1}{||r}{$0.001$} & \multicolumn{1}{r}{$0.162$} & 
\multicolumn{1}{|r}{$0.014$} & \multicolumn{1}{r}{$0.115$} & 
\multicolumn{1}{|r}{$0.011$} & \multicolumn{1}{r}{$0.105$} \\ \hline
&  &  &  &  &  &  & 
\end{tabular}%
\caption{Triweight-kernel estimation results for the shape parameter
$\gamma_{1}=0.8$ of Burr's model based on 1000 right-truncated samples,
along with other existing estimators}\label{Tab4}%
\end{table}%

\section{\textbf{Proofs\label{sec4}}}

\noindent The proof is based on a useful weak approximation to the tail
product-limit process recently provided by \cite{BchMN-15b}. From $(\ref%
{limit-1}),$ the estimator $\widehat{\gamma }_{1,\mathbb{K}}$ may be
rewritten into%
\begin{equation*}
\widehat{\gamma }_{1,\mathbb{K}}=\int_{1}^{\infty }x^{-1}\Psi _{\mathbb{K}%
}\left( \frac{\overline{\mathbf{F}}_{n}\left( xX_{n-k:n}\right) }{\overline{%
\mathbf{F}}_{n}\left( X_{n-k:n}\right) }\right) dx.
\end{equation*}%
Recall that $\Psi _{\mathbb{K}}\left( s\right) =s\mathbb{K}\left( s\right) ,$
then it is easy to verify that $\int_{1}^{\infty }x^{-1}\Psi _{\mathbb{K}%
}\left( x^{-1/\gamma _{1}}\right) dx=\gamma _{1}.$ Hence%
\begin{equation*}
\widehat{\gamma }_{1,\mathbb{K}}-\gamma _{1}=\int_{1}^{\infty }x^{-1}\left\{
\Psi _{\mathbb{K}}\left( \frac{\overline{\mathbf{F}}_{n}\left(
xX_{n-k:n}\right) }{\overline{\mathbf{F}}_{n}\left( X_{n-k:n}\right) }%
\right) -\Psi _{\mathbb{K}}\left( x^{-1/\gamma _{1}}\right) \right\} dx.
\end{equation*}%
Let%
\begin{equation}
\mathbf{D}_{n}\left( x\right) :=\sqrt{k}\left( \frac{\overline{\mathbf{F}}%
_{n}\left( xX_{n-k:n}\right) }{\overline{\mathbf{F}}_{n}\left(
X_{n-k:n}\right) }-x^{-1/\gamma _{1}}\right) ,\text{ }x>0,  \label{Dn}
\end{equation}%
be the tail product-limit process, then Taylor's expansion of $\Psi _{%
\mathbb{K}}$ yields that%
\begin{equation*}
\sqrt{k}\left( \widehat{\gamma }_{1,\mathbb{K}}-\gamma _{1}\right)
=\int_{1}^{\infty }x^{-1}\mathbf{D}_{n}\left( x\right) g_{\mathbb{K}}\left(
x^{-1/\gamma _{1}}\right) dx+R_{n1},
\end{equation*}%
with $R_{n1}:=2^{-1}k^{-1/2}\int_{1}^{\infty }x^{-1}\mathbf{D}_{n}^{2}\left(
x\right) g_{\mathbb{K}}^{\prime }\left( \xi _{n}\left( x\right) \right) dx,$
where $\xi _{n}\left( x\right) $\ is a stochastic intermediate value lying
between $\overline{\mathbf{F}}_{n}\left( xX_{n-k:n}\right) /\overline{%
\mathbf{F}}_{n}\left( X_{n-k:n}\right) $ and $x^{-1/\gamma _{1}}.$ According
to \cite{BchMN-15b}, we have, for $0<\epsilon <1/2-\gamma /\gamma _{2}$%
\begin{equation}
\sup_{x\geq 1}x^{\left( 1/2-\epsilon \right) /\gamma -1/\gamma
_{2}}\left\vert \mathbf{D}_{n}\left( x\right) -\mathbf{\Gamma }\left( x;%
\mathbf{W}\right) -x^{-1/\gamma _{1}}\dfrac{x^{\tau _{1}/\gamma _{1}}-1}{%
\gamma _{1}\tau _{1}}\sqrt{k}\mathbf{A}_{0}\left( n/k\right) \right\vert 
\overset{\mathbf{P}}{\rightarrow }0,\text{ as }N\rightarrow \infty ,
\label{approx}
\end{equation}%
where$\mathbb{\ }\left\{ \Gamma \left( x;\mathbf{W}\right) ;\text{ }%
x>0\right\} $\textbf{\ }is a Gaussian process defined by%
\begin{align*}
& \mathbf{\Gamma }\left( x;\mathbf{W}\right) 
\begin{tabular}{l}
$:=$%
\end{tabular}%
\frac{\gamma }{\gamma _{1}}x^{-1/\gamma _{1}}\left\{ x^{1/\gamma }\mathbf{W}%
\left( x^{-1/\gamma }\right) -\mathbf{W}\left( 1\right) \right\}  \\
& \ \ \ \ \ \ +\frac{\gamma }{\gamma _{1}+\gamma _{2}}x^{-1/\gamma
_{1}}\int_{0}^{1}s^{-\gamma /\gamma _{2}-1}\left\{ x^{1/\gamma }\mathbf{W}%
\left( x^{-1/\gamma }s\right) -\mathbf{W}\left( s\right) \right\} ds.
\end{align*}%
Now, we write $\sqrt{k}\left( \widehat{\gamma }_{1,K}-\gamma _{1}\right)
=\int_{1}^{\infty }x^{-1}\mathbf{\Gamma }\left( x;\mathbf{W}\right) g_{%
\mathbb{K}}\left( x^{-1/\gamma _{1}}\right) dx+\sum_{i=1}^{3}R_{ni},$ where%
\begin{equation*}
R_{n2}:=\int_{1}^{\infty }x^{-1}\left\{ \mathbf{D}_{n}\left( x\right) -%
\mathbf{\Gamma }\left( x;\mathbf{W}\right) -x^{-1/\gamma _{1}}\dfrac{x^{\tau
_{1}/\gamma _{1}}-1}{\gamma _{1}\tau _{1}}\sqrt{k}\mathbf{A}_{0}\left(
n/k\right) \right\} g_{\mathbb{K}}\left( x^{-1/\gamma _{1}}\right) dx,
\end{equation*}%
and%
\begin{equation*}
R_{n3}:=\int_{1}^{\infty }x^{-1}\left\{ x^{-1/\gamma _{1}}\dfrac{x^{\tau
_{1}/\gamma _{1}}-1}{\gamma _{1}\tau _{1}}\sqrt{k}\mathbf{A}_{0}\left(
n/k\right) \right\} g_{\mathbb{K}}\left( x^{-1/\gamma _{1}}\right) dx.
\end{equation*}%
Elementary calculation yields that%
\begin{equation*}
\int_{1}^{\infty }x^{-1}\mathbf{\Gamma }\left( x;\mathbf{W}\right) g_{%
\mathbb{K}}\left( x^{-1/\gamma _{1}}\right) dx=\left( \gamma ^{2}/\gamma
_{1}\right) \int_{0}^{1}s^{-1}\mathbf{W}\left( s\right) d\left\{ s\varphi _{%
\mathbb{K}}\left( s\right) \right\} =:Z,
\end{equation*}%
\ where $\varphi _{\mathbb{K}}\left( s\right) $ is that defined in the
theorem. Next, we evaluate the remainder terms $R_{ni},$ $i=1,2,3.$ First,
we show that $R_{n1}$ tends to zero in probability, as $N\rightarrow \infty .
$ Recall that $\gamma _{1}<\gamma _{2}$ and $0<\epsilon <1/2-\gamma /\gamma
_{2},$ then $\left( 1/2-\epsilon \right) /\gamma -1/\gamma _{2}>0.$ It
follows that $\int_{1}^{\infty }x^{2\left( 1/\gamma _{2}-\left( 1/2-\xi
\right) /\gamma \right) -1}dx$ is finite and, from Lemma \ref{Lemma1}, we
get $\sup_{x\geq 1}\left\vert \mathbf{D}_{n}^{2}\left( x\right) \right\vert
=O_{\mathbf{p}}\left( 1\right) .$ On the other hand, from assumption $\left[ 
\mathbb{C}4\right] ,$ we infer that $g_{\mathbb{K}}^{\prime }$ is bounded on 
$\left( 0,1\right) .$ Consequently, we have $R_{n1}=o_{\mathbf{p}}\left(
1\right) .$ Second, for the term $R_{n2},$ we use approximation\ $(\ref%
{approx}),$ to get%
\begin{equation*}
R_{n2}=o_{\mathbf{p}}\left( 1\right) \int_{1}^{\infty }x^{1/\gamma
_{2}-\left( 1/2-\epsilon \right) /\gamma -1}\left\vert g_{\mathbb{K}}\left(
x^{-1/\gamma _{1}}\right) \right\vert dx.
\end{equation*}%
Since $g_{\mathbb{K}}$ is bounded on $\left( 0,1\right) ,$ then $R_{n2}=o_{%
\mathbf{p}}\left( 1\right) .$ Finally, we show that the third term $R_{n3}$
is equal to $\sqrt{k}\mathbf{A}_{0}\left( n/k\right) \int_{0}^{1}s^{-\tau
_{1}}\mathbb{K}\left( s\right) ds.$ Observe that%
\begin{equation*}
R_{n3}=\sqrt{k}\mathbf{A}_{0}\left( n/k\right) \int_{1}^{\infty
}x^{-1/\gamma _{1}-1}\dfrac{x^{\tau _{1}/\gamma _{1}}-1}{\gamma _{1}\tau _{1}%
}g_{\mathbb{K}}\left( x^{-1/\gamma _{1}}\right) dx.
\end{equation*}%
By using a change of variables and by replacing $\Psi _{\mathbb{K}}\left(
s\right) =s\mathbb{K}\left( s\right) ,$ we end up with%
\begin{equation*}
\int_{1}^{\infty }x^{-1/\gamma _{1}-1}\dfrac{x^{\tau _{1}/\gamma _{1}}-1}{%
\gamma _{1}\tau _{1}}g_{\mathbb{K}}\left( x^{-1/\gamma _{1}}\right)
dx=\int_{0}^{1}s^{-\tau _{1}}\mathbb{K}\left( s\right) ds.
\end{equation*}%
For the second part of the theorem, it suffices to use Lemma 8 in \cite{CDM}%
, to show that the variance of the centred Gaussian rv $Z$ equals $\sigma _{%
\mathbb{K}}^{2}.$ Finally, whenever $\sqrt{k_{N}}\mathbf{A}_{0}\left(
N/k_{N}\right) \rightarrow \lambda ,$ we have $R_{n3}\overset{\mathbf{p}}{%
\rightarrow }\lambda \int_{0}^{1}s^{-\tau _{1}}\mathbb{K}\left( s\right) ds,$
as $N\rightarrow \infty ,$ which corresponds to the asymptotic bias $\mu _{%
\mathbb{K}}$ as sought.

\section{Appendix}

\begin{lemma}
\label{Lemma1}Under the assumptions of Theorem \ref{Theorem1}, we have, for
any $0<\epsilon <1/2-\gamma /\gamma _{2}$%
\begin{equation*}
\sup_{x\geq 1}x^{\left( 1/2-\epsilon \right) /\gamma -1/\gamma
_{2}}\left\vert \mathbf{D}_{n}\left( x\right) \right\vert =O_{\mathbf{p}%
}\left( 1\right) ,\text{ as }N\rightarrow \infty .
\end{equation*}
\end{lemma}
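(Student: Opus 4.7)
The plan is to leverage the weak approximation (\ref{approx}) of the tail product-limit process (due to Benchaira et al., 2015b), which already incorporates precisely the weight $w(x):=x^{(1/2-\epsilon)/\gamma-1/\gamma_{2}}$ appearing in the lemma. Writing $b(x):=x^{-1/\gamma_{1}}(x^{\tau_{1}/\gamma_{1}}-1)/(\gamma_{1}\tau_{1})$ for the deterministic bias profile and applying the triangle inequality, one obtains
\[
\sup_{x\geq 1}w(x)|\mathbf{D}_{n}(x)|\leq S_{1}+S_{2}+|\sqrt{k}\mathbf{A}_{0}(n/k)|\,S_{3},
\]
where $S_{1}:=\sup_{x\geq 1}w(x)|\mathbf{D}_{n}(x)-\mathbf{\Gamma}(x;\mathbf{W})-b(x)\sqrt{k}\mathbf{A}_{0}(n/k)|$, $S_{2}:=\sup_{x\geq 1}w(x)|\mathbf{\Gamma}(x;\mathbf{W})|$ and $S_{3}:=\sup_{x\geq 1}w(x)|b(x)|$. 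I will show $S_{1}=o_{\mathbf{p}}(1)$, $S_{3}=O(1)$, and $S_{2}=O_{\mathbf{p}}(1)$ which, combined with the standing hypothesis $\sqrt{k}\mathbf{A}_{0}(n/k)=O(1)$, yields the claim.

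The bound $S_{1}=o_{\mathbf{p}}(1)$ is immediate from (\ref{approx}). For $S_{3}$, the assumption $\tau_{1}<0$ gives $|x^{\tau_{1}/\gamma_{1}}-1|\leq 1$ on $[1,\infty)$, and the key identity $1/\gamma=1/\gamma_{1}+1/\gamma_{2}$ yields $w(x)x^{-1/\gamma_{1}}=x^{-(1/2+\epsilon)/\gamma}\leq 1$ for $x\geq 1$, so $S_{3}\leq 1/(\gamma_{1}|\tau_{1}|)$.

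The main obstacle is controlling $S_{2}$. My strategy is to exploit the almost sure local H\"{o}lder regularity of Brownian motion: for any $\eta\in(0,\epsilon)$ there exists a finite random variable $C_{\eta}$ with $|\mathbf{W}(t)|\leq C_{\eta}\,t^{1/2-\eta}$ for all $t\in(0,1]$, which is a standard consequence of L\'{e}vy's modulus of continuity together with $\mathbf{W}(0)=0$. Substituting this into the explicit form of $\mathbf{\Gamma}(x;\mathbf{W})$ and using $x^{-1/\gamma_{1}}\cdot x^{1/\gamma}=x^{1/\gamma_{2}}$, the first summand of $\mathbf{\Gamma}$ is a.s.\ bounded in modulus by $C_{\eta}x^{1/\gamma_{2}-(1/2-\eta)/\gamma}$, and multiplication by $w(x)$ produces $C_{\eta}x^{(\eta-\epsilon)/\gamma}$, which is bounded on $[1,\infty)$ because $\eta<\epsilon$. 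For the integral term in $\mathbf{\Gamma}$, I would split the inner integral at $s=x^{-1/\gamma}$, apply the H\"{o}lder bound to each occurrence of $\mathbf{W}$, and verify integrability of $s^{-\gamma/\gamma_{2}-1+(1/2-\eta)}$ near zero; this requires $\eta<1/2-\gamma/\gamma_{2}$, which is allowed by the standing restriction $\epsilon<1/2-\gamma/\gamma_{2}$ (itself ensured by $\gamma_{1}<\gamma_{2}$). The same $x^{(\eta-\epsilon)/\gamma}$-type majorant emerges after routine bookkeeping, so $S_{2}<\infty$ almost surely and hence $S_{2}=O_{\mathbf{p}}(1)$.

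Combining the three estimates delivers $\sup_{x\geq 1}w(x)|\mathbf{D}_{n}(x)|=O_{\mathbf{p}}(1)$. The only genuine technical care lies in the exponent balancing and the $s$-integrability check in the control of $S_{2}$; everything else is a direct consequence of (\ref{approx}) and elementary arithmetic on the exponents of $x$.
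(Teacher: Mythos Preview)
Your proof is correct and follows essentially the same decomposition as the paper: both split $\sup_{x\geq 1}w(x)|\mathbf{D}_{n}(x)|$ via the triangle inequality into the approximation error from (\ref{approx}), the weighted Gaussian piece, and the weighted bias profile, and both handle the first and third pieces by the same elementary exponent arithmetic. Your treatment of the Gaussian term $S_{2}$ via the almost-sure H\"older bound $|\mathbf{W}(t)|\leq C_{\eta}t^{1/2-\eta}$ is in fact more explicit than the paper's, which disposes of $T_{3}$ by noting that $w(x)\mathbf{\Gamma}(x;\mathbf{W})$ equals $x^{-(1/2+\epsilon)/\gamma}$ times a centred Gaussian quantity and that the prefactor is bounded by $1$ on $[1,\infty)$; your route makes the uniformity in $x$ transparent, and the splitting at $s=x^{-1/\gamma}$ you mention is not even needed once the H\"older bound is applied directly.
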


\begin{proof}
This result is straightforward from the weak approximation\ $(\ref{approx}).$
Indeed, it is clear that $\sup_{x\geq 1}x^{\left( 1/2-\epsilon \right)
/\gamma -1/\gamma _{2}}\left\vert \mathbf{D}_{n}\left( x\right) \right\vert
\leq T_{1,n}+T_{2,n}+T_{3},$ where%
\begin{equation*}
T_{1,n}:=\sup_{x\geq 1}x^{\left( 1/2-\epsilon \right) /\gamma -1/\gamma
_{2}}\left\vert \mathbf{D}_{n}\left( x\right) -\mathbf{\Gamma }\left( x;%
\mathbf{W}\right) -x^{-1/\gamma _{1}}\dfrac{x^{\tau _{1}/\gamma _{1}}-1}{%
\gamma _{1}\tau _{1}}\sqrt{k}\mathbf{A}_{0}\left( n/k\right) \right\vert ,
\end{equation*}%
\begin{equation*}
T_{2,n}:=\frac{\sqrt{k}\mathbf{A}_{0}\left( n/k\right) }{\gamma _{1}\tau _{1}%
}\sup_{x\geq 1}\left\{ x^{-\left( 1/2+\epsilon \right) /\gamma }\left(
1-x^{\tau _{1}/\gamma _{1}}\right) \right\} \text{ and }T_{3}:=\sup_{x\geq
1}x^{\left( 1/2-\epsilon \right) /\gamma -1/\gamma _{2}}\left\vert \mathbf{%
\Gamma }\left( x;\mathbf{W}\right) \right\vert .
\end{equation*}%
First, it is readily checked from $(\ref{approx})$ that $T_{1,n}=o_{\mathbf{p%
}}\left( 1\right) .$ Second, observe that, in addition to the assumption $%
\sqrt{k}\mathbf{A}_{0}\left( n/k\right) =O_{\mathbf{p}}\left( 1\right) ,$ we
have $x^{-\left( 1/2+\epsilon \right) /\gamma }\left( 1-x^{\tau _{1}/\gamma
_{1}}\right) \leq 2,$ for $x\geq 1,$ it follows that $T_{2,n}=O_{\mathbf{p}%
}\left( 1\right) .$ Finally, note that $x^{\left( 1/2-\epsilon \right)
/\gamma -1/\gamma _{2}}\mathbf{\Gamma }\left( x;\mathbf{W}\right) $ is equal
to%
\begin{align*}
& x^{-\left( 1/2+\epsilon \right) /\gamma }\left\{ \frac{\gamma }{\gamma _{1}%
}\left( x^{1/\gamma }\mathbf{W}\left( x^{-1/\gamma }\right) -\mathbf{W}%
\left( 1\right) \right) \right. \\
& \text{ \ \ \ \ \ \ \ \ \ \ \ }+\left. \frac{\gamma }{\gamma _{1}+\gamma
_{2}}\int_{0}^{1}s^{-\gamma /\gamma _{2}-1}\left( x^{1/\gamma }\mathbf{W}%
\left( x^{-1/\gamma }s\right) -\mathbf{W}\left( s\right) \right) ds\right\} ,
\end{align*}%
where the quantity between brackets is a Gaussian rv and $x^{-\left(
1/2+\epsilon \right) /\gamma }\leq 1,$ for $x\geq 1.$ Therefore, $T_{3}=O_{%
\mathbf{p}}\left( 1\right) $ and the proof is completed.
\end{proof}

\end{document}